\newtheorem{prop}{Proposition}
\newtheorem{proposition}[prop]{Proposition}
\newtheorem{thm}{Theorem}
\newtheorem{theorem}[thm]{Theorem}
\newtheorem{cor}{Corollary}
\newtheorem{corollary}[cor]{Corollary}
\theoremstyle{remark}
\newtheorem*{remark}{Remark}
\newtheorem*{Conjecture}{Conjecture}
\newtheorem{Challenge}{Challenge}
\newtheorem*{definition}{Definition}
\newtheorem{exam}{Example}
\newtheorem{example}[exam]{Example}
\newtheorem*{Acknowledgements}{Acknowledgements}
\def\\{\cr}
\def\({\left(}
\def\){\right)}
\def\<{\langle}
\def\>{\rangle}
\def\func#1{\mathop{\rm #1}}%
\def\mathscr{\mathcal}       
\begin{document}
\title{Horizontal and Vertical Log-Concavity}
\author{Bernhard Heim }
\address{Faculty of Mathematics, Computer Science, and Natural Sciences,
RWTH Aachen University, 52056 Aachen, Germany}
\email{bernhard.heim@rwth-aachen.de}
\author{Markus Neuhauser}
\address{Kutaisi International University (KIU), Youth Avenue, Turn 5/7 Kutaisi, 4600 Georgia}
\address{Faculty of Mathematics, Computer Science, and Natural Sciences,
RWTH Aachen University, 52056 Aachen, Germany}
\email{markus.neuhauser@kiu.edu.ge}
\subjclass[2010]{Primary 11B37, 13F20; Secondary 05A10, 11B83}
\keywords{Binomial coefficients, Log-concavity, Polynomials, Recurrences, Special sequences}
\pagenumbering{arabic}

\begin{abstract}
Horizontal and vertical generating functions and recursion relations have been
investigated by Comtet for triangular double sequences.
In this paper we investigate the horizontal and vertical log-concavity
of triangular sequences assigned to polynomials which show up in
combinatorics, number theory and physics.
This includes Laguerre polynomials, the Pochhammer polynomials, the D'Arcais and
Nekrasov--Okounkov polynomials.

\end{abstract}
\maketitle
\newpage
\section{Introduction}
Log-concavity of sequences of numbers, for example binomial coefficients and Stirling numbers,
coefficients of polynomials,
and values of discrete random variables, is an important characteristic studied
in algebra, combinatorics, computer science, number theory, probability
and statistical mechanics.

Comtet (\cite{Co74}, Chapter V), recorded horizontal and vertical recurrence relations for
triangular sequences, including Stirling numbers of the first kind and of the second kind.
This paper investigates  horizontal and vertical log-concavity for 
the Pochhammer polynomials, D'Arcais and Nekrasov--Okounkov polynomials.
In several cases the vertical log-concavity fails, but still a
vertical $C$-log-concavity
seems to be in place. We refer to a recent paper by Hong and Zhang (\cite{HZ20}, see also Section 3).

In 2003, Nekrasov and Okounkov \cite{NO03,NO06} discovered a spectacular hook length formula, which comprises 
building blocks
polynomials $Q_n(x)$ of degree $n$ involving all partitions $\lambda \vdash n$ of $n$
and the multisets of hook lengths $\mathcal{H}(\lambda)$:
\[
\label{Q}
Q_n(x) := 
\sum_{ \lambda \vdash n} 
\prod_{ h \in \mathcal{H}(\lambda)} 
\left(  \frac{h^2 + x}{h^2} \right) \in \mathbb{C}[x].
\]
Shortly after that Westbury \cite{We06} and Han \cite{Ha10} also discovered the same formula utilizing different methods.
The polynomials $Q_n(x)$ have non-negative rational coefficients and the sequence of coefficients has no internal zeros.
Applying Newton's theorem could be considered in order to prove log-concavity, if the roots are real, 
which was conjectured (see Amdeberhan \cite{Am15}).
Recently, Heim and Neuhauser \cite{HN18} disproved the conjecture.
Nevertheless, there is overwhelming numerical evidence that $Q_n(x)$ is still log-concave. We have
used PARI/GP to check this up to $n=1500$.

Recently, Hong and Zhang \cite{HZ20}, supervised by Ono, invested in the log-concavity and unimodality of $Q_n(x)$.
They proved for sufficiently large $n$ that  a huge proportion of the coefficients of $Q_n(x)$ at the 
beginning of the sequence have the log-concavity property, at the end they are
decaying.
Additionally, they came up with a conjecture which also implies the unimodality for $Q_n(x)$ for large $n$.

In this paper we define families of polynomials $\{P_n^{g,h}(x)\}_{n=1}^{\infty}$, 
where $g$ and $h$ are normalized non-vanishing arithmetic functions.
Let $P_0^{g,h}(x)=1$. Then 
\begin{equation}\label{rec}
P_n^{g,h}(x) := \frac{x}{h(n)} \sum_{k=1}^n g(k) \, P_{n-k}^{g,h}(x).
\end{equation}
Although most of the results stated in this paper, assume that  $h \in \{ \func{id},1\}$,
we keep the general notation to emphasize the connection by (\ref{rec}) and the hope to
come up with more general theorems in the future.

We are interested in particular in the arithmetic functions 
$ 1(n)=1$, $\func{id}(n)=n$, $s(n)=n^2$ and $ \sigma(n)= \sum_{d \vert n} d $.
Further, let $\tilde{g}(n):= g(n)/n$.
Note that $\tilde{1}(n)= 1/n$.
Suppose that $G(T):=\sum_{n=1}^{\infty} g(n) T^n$ is
analytic at $T=0$. We will then have 
for $h \in \{ \func{id},1\}$ generating series of
exponential and geometric type (see also Corollary~\ref{gen}):
\begin{eqnarray}
\sum_{n=0}^{\infty} P_n^{g, \func{id}}(x) \, T^n & = & \exp \left(x \sum_{n=1}^{\infty} g(n) \, \frac{T^n}{n} \right) ,\label{E}\\
\sum_{n=0}^{\infty} P_n^{g,1}(x) \, T^n & = & \frac{1}{ 1 - x \sum_{n=1}^{\infty} g(n) \, T^n}. \label{G}
\end{eqnarray}
We note that for specific arithmetic functions $g$, we obtain the Pochhammer polynomials (rising
factorials: $g=1$, $h=\func{id}$),
associated Laguerre polynomials
($g=\func{id}$, $h=\func{id}$),
Chebyshev polynomials of the second kind ($g=\func{id}$, $h=1$), and also
polynomials attached to reciprocals of
Klein's $j$-invariant ($g\left( n\right) $ is the $n-1$st coefficient of
Klein's $j$-invariant, cf.\ OEIS sequence A000521, divided by $744$, $h=1$) and reciprocals of
Eisenstein series $E_k$ of weight $k$ ($g=\sigma _{k-1}$, $h=1$)
\cite{HN19B,HLN19}.

Important examples in this paper are the D'Arcais polynomials and the related
Nekrasov--Okounkov polynomials.
Let $g(n)= \sigma(n)$ and $h(n)=\func{id}(n)=n$. Then the D'Arcais polynomials \cite{DA13, We06, HLN19}
$P_n^{\sigma, \func{id}}(x)$ are equal to the
$n$th coefficient of the $-x$th power of the Dedekind $\eta$-function \cite{On03}.
Nekrasov and Okounkov proved that
\[
Q_n(x) =  P_n^{\sigma, \func{id}}(x +1).
\]
Brenti's result \cite{Br97},
on polynomials with non-negative coefficients and where the sequence of coefficients has no internal zeros,
implies that if the D'Arcais polynomials are log-concave then the
Nekrasov--Okounkov polynomials also are log-concave.
The converse of Brenti's Theorem is in general not true and it is not generally true for unimodality. 
\newline

The purpose of this paper is three-fold. We first prove that there
is a crossover between the 
coefficients of $P_n^{\tilde{g},1}(x)$ and $P_n^{g,\func{id}}(x)$. The crossover transfers log-concavity
from $P_n^{\tilde{g},1}(x)$ to $P_n^{g,\func{id}}(x)$.
Second, suppose that $A_{n,m}^{g,h}$ is the
$m$th coefficient of $P_{n}^{g,h}(x)$. Suppose that $m$ is
fixed, then we examine
the vertical log-concavity with respect to $n$. Hong and Zhang \cite{HZ20} defined a property
which could still be true, when the vertical log-concavity fails. Let us name this new property
vertical $C$-log-concavity
(see Section 3).
Finally we
raise a question on horizontal and vertical
$C$-log-concavity on the double sequence of
coefficients of $\{P_n^{g,h}(x)\}_{n=1}^{\infty}$, where $h=1$ or $h=\func{id}$.
\section{First Results}                                              
Suppose that $g$ is
a normalized arithmetic function. We say that $g$ is of moderate growth if $G(T)$ is analytic at $T=0$.
Further, suppose that $h$ is
a normalized non-vanishing arithmetic function. We then have
for $n\geq 1$:
\[
P_n^{g,h}(x) = A_{n,n}^{g,h} x^n + A_{n,n-1}^{g,h} x^{n-1}+ \ldots + A_{n,1}^{g,h} x.
\]
Here $A_{n,n}^{g,h} = 1/\prod_{k=1}^n h(k)$ and $A_{n,1}^{g,h}= g(n)/h(n)$. Moreover, suppose that the values of $g$ and $h$
are positive integers, then
\begin{equation} \label{conv}
\left( \prod_{k=1}^n h(k) \right)  A_{n,m}^{g,h} \in \mathbb{N} \text{ for }
1 \leq m \leq n .
\end{equation}
It is difficult to obtain results like (\ref{conv}) for arbitrary pairs $(g,h)$.
Since $P_n^{\func{id}, \func{id}}\left( x\right) $ has only real roots, but some of the Nekrasov--Okounkov polynomials, e.~g.\
$Q_{10}(x) = P_{10}^{\sigma, \func{id}} (x+1)$ have also non-real roots \cite{HN18}, it is obvious, that some results
only work for specific chosen pairs. It is not clear to us if the following conversion principle can
be stated in a more general form.
\begin{theorem} [Conversion Principle] 
\label{th1}Suppose that $g$ is
a normalized arithmetic function of moderate growth. Then
\[
      A_{n,m}^{g,\func{id}} =  \frac{1}{m!} \, A_{n,m}^{\tilde{g},1}. 
\]
\end{theorem}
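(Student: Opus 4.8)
The plan is to prove the identity by comparing the two generating functions and specializing the geometric one to the arithmetic function $\tilde g$. The crucial observation is that both sides are built from the \emph{same} power series
\[
L(T) := \sum_{n=1}^{\infty} g(n)\,\frac{T^n}{n},
\]
which is a well-defined formal power series (indeed analytic near $T=0$) because $g$ is of moderate growth. By (\ref{E}) we have $\sum_{n} P_n^{g,\func{id}}(x)\,T^n = \exp\bigl(x\,L(T)\bigr)$. Applying (\ref{G}) with $g$ replaced by $\tilde g$ gives $\sum_{n} P_n^{\tilde g,1}(x)\,T^n = \bigl(1 - x\,L(T)\bigr)^{-1}$, since $\sum_n \tilde g(n)\,T^n = \sum_n \frac{g(n)}{n}\,T^n = L(T)$. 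Thus the entire difference between the two families is encoded in the difference between the exponential series and the geometric series in the variable $x\,L(T)$.

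Next I would expand both generating functions as power series in $x$ and read off the coefficient of $x^m$. From $\exp\bigl(x\,L(T)\bigr) = \sum_{m\ge 0} \frac{1}{m!}\,\bigl(x\,L(T)\bigr)^m$ the coefficient of $x^m$ is $\frac{1}{m!}\,L(T)^m$, while from $\bigl(1 - x\,L(T)\bigr)^{-1} = \sum_{m\ge 0} \bigl(x\,L(T)\bigr)^m$ it is $L(T)^m$. Since $A_{n,m}^{g,\func{id}}$ and $A_{n,m}^{\tilde g,1}$ are by definition the coefficients of $x^m T^n$ in these two expansions, extracting the coefficient of $T^n$ yields
\[
A_{n,m}^{g,\func{id}} = \frac{1}{m!}\,[T^n]\,L(T)^m
\mand
A_{n,m}^{\tilde g,1} = [T^n]\,L(T)^m,
\]
and dividing gives the claimed identity $A_{n,m}^{g,\func{id}} = \frac{1}{m!}\,A_{n,m}^{\tilde g,1}$.

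I expect no serious obstacle here; the only point requiring a little care is the bookkeeping of the double coefficient extraction in $x$ and $T$ and the justification that the two extractions may be interchanged. This is immediate for formal power series, and legitimate analytically because $L(T)$ vanishes at $T=0$, so $L(T)^m$ contributes to $T^n$ only for $n\ge m$ and each fixed total degree receives finitely many contributions. A purely recursive alternative, inductively comparing the coefficient recurrences obtained from (\ref{rec}) for the pairs $(g,\func{id})$ and $(\tilde g,1)$, is tempting, but the two recurrences carry mismatched weights (a factor $g(k)/n$ in one case against $g(k)/k$ in the other), so a naive single induction on $m$ does not close. This is precisely the difficulty that the generating-function route sidesteps, which is why I would favor it.
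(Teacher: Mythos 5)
Your coefficient-extraction step is exactly the paper's: expand $\exp\left( x\mathcal{E}_g(T)\right) $ and $\left( 1-x\mathcal{E}_g(T)\right) ^{-1}$ in powers of $x$, observe that the $x^m$-coefficients are $\frac{1}{m!}\mathcal{E}_g(T)^m$ and $\mathcal{E}_g(T)^m$ respectively, and compare coefficients of $T^n$. The problem is what you take as input. You invoke (\ref{E}) and (\ref{G}) as known facts, but in the paper these generating-function identities are only \emph{previewed} in the introduction (``We will then have \ldots see also Corollary~\ref{gen}''), and Corollary~\ref{gen} is explicitly obtained \emph{from} the proof of Theorem~\ref{th1}. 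Since the polynomials $P_n^{g,h}(x)$ are defined by the recursion (\ref{rec}), deriving the two generating series from (\ref{rec}) is the actual substance of the theorem; once they are granted, all that remains is the trivial remark that the Taylor coefficients of $e^y$ and $(1-y)^{-1}$ differ by a factor $m!$. As written, your argument assumes the hard part.

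The two missing derivations are of unequal weight. The geometric identity for the pair $(\tilde g,1)$ is a one-line verification: multiply $\sum_{n}P_n^{\tilde g,1}(x)T^n$ by $1-x\mathcal{E}_g(T)$ and check via the Cauchy product that the recursion (\ref{rec}) with $h=1$ and $g$ replaced by $\tilde g$ forces every coefficient except the constant term to vanish — this is exactly the paper's one-sentence argument. The exponential identity for $(g,\func{id})$ is the real content: the paper encodes the recursion in the functional equation (\ref{functional}), $\partial_T F_g(x,T)=F_g(x,T)\cdot x\,\mathcal{E}_g'(T)$, and then identifies the solution with initial value $F_g(x,0)=1$ as $\exp\left( x\mathcal{E}_g(T)\right) $ (via an Euler product whose logarithm is $x\mathcal{E}_g(T)$; equivalently one can integrate the logarithmic derivative directly and appeal to uniqueness of the formal power-series solution). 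Your closing observation about the mismatched weights $g(k)/n$ versus $g(k)/k$ in a naive induction is apt — that mismatch is precisely what the functional-equation argument resolves, since differentiating in $T$ produces the factor $n$ — but noting the difficulty does not discharge it: you still owe a proof of (\ref{E}), and with it your proposal becomes the paper's proof.
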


\begin{example}
We have $P_n^{1,1}(x) = x (x+1)^{n-1}$ for $n\geq 1$. This implies that
\begin{equation}
A_{n,m}^{1,1} = \binom{n-1}{m-1} \text{ and from
Theorem \ref{th1} that } A_{n,m}^{\func{id},\func{id}} = \frac{1}{m!} \binom{n-1}{m-1}.
\label{eq:laguerre}
\end{equation}
This further implies that
$P_n^{\func{id}, \func{id}}(x) = \frac{x}{n} L_{n-1}^{(1)}\left( -x\right) $,
where $L_n^{(\alpha)}(x)$ is
the associated Laguerre polynomial of degree $n$ with parameter $\alpha$ (\cite{Do16}, Chapter 3).
Note that (\ref{eq:laguerre})
gives conceptional proof of Theorem 2 in \cite{HN19A}; see also \cite{HLN19}.
Recall that $L_n^{(\alpha)}(x)$ for $\alpha >-1$ are orthogonal polynomials and solutions of the differential equation
\[
x \frac{ \mathrm{d}^2y}{\mathrm{d}x^2} + (\alpha + 1 -x) \frac{\mathrm{d}y}{\mathrm{d}x} + n y =0.
\]
\end{example}

Before we prove Theorem \ref{th1} we provide some applications.
\begin{corollary}\label{cor1}
Suppose that $P_n^{\tilde{g},1}(x)$ is
log-concave, then $P_n^{g,\func{id}}\,(x)$ is log-concave.
\end{corollary}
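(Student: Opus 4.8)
The plan is to reduce everything to the coefficientwise identity supplied by the Conversion Principle (Theorem~\ref{th1}) and then observe that passing from $P_n^{\tilde{g},1}$ to $P_n^{g,\func{id}}$ can only \emph{improve} log-concavity. Fix $n$ and write $B_m := A_{n,m}^{\tilde{g},1}$ and $C_m := A_{n,m}^{g,\func{id}}$ for the two coefficient sequences, so that Theorem~\ref{th1} reads $C_m = B_m/m!$. Recall that log-concavity of a polynomial with non-negative coefficients and no internal zeros means $C_m^2 \ge C_{m-1}C_{m+1}$ for each interior index $m$, and likewise for the $B_m$; here all coefficients are non-negative, so these products are meaningful.

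First I would substitute $C_m = B_m/m!$ into the target inequality $C_m^2 \ge C_{m-1}C_{m+1}$ and clear the factorials. A short computation using
\[
\frac{(m!)^2}{(m-1)!\,(m+1)!} = \frac{m}{m+1}
\]
turns the desired inequality into $B_m^2 \ge \tfrac{m}{m+1}\, B_{m-1}B_{m+1}$. The key point is that the factor $\tfrac{m}{m+1}$ is strictly less than $1$ for every $m \ge 1$. Equivalently, the sequence $(1/m!)_m$ is itself log-concave (since $\log(m!)$ is convex in $m$), and a product of two log-concave non-negative sequences is again log-concave.

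Since by hypothesis $P_n^{\tilde{g},1}$ is log-concave, we have $B_m^2 \ge B_{m-1}B_{m+1}$, and because the coefficients are non-negative this gives $B_m^2 \ge B_{m-1}B_{m+1} \ge \tfrac{m}{m+1}\,B_{m-1}B_{m+1}$, which is exactly the inequality needed for the $C_m$. Hence $P_n^{g,\func{id}}$ is log-concave.

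I do not expect a genuine obstacle here: the statement is essentially a monotonicity remark, namely that rescaling the $m$-th coefficient by the log-concave weight $1/m!$ cannot destroy log-concavity. The only points needing a line of care are the boundary indices $m=1$ and $m=n$, where the no-internal-zeros hypothesis handles the degenerate cases, and the verification that all coefficients are non-negative, so that the chain $B_{m-1}B_{m+1} \ge \tfrac{m}{m+1}B_{m-1}B_{m+1}$ runs in the correct direction rather than being reversed.
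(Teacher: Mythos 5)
Your proof is correct and is essentially the argument the paper intends: Corollary~\ref{cor1} is stated there as an immediate application of Theorem~\ref{th1} with no written details, and your computation --- dividing by the log-concave weights $1/m!$, using $\frac{(m!)^2}{(m-1)!\,(m+1)!}=\frac{m}{m+1}\le 1$ together with non-negativity of the coefficients --- is exactly the natural way to fill them in. Your observation about the factor $\frac{m}{m+1}$ also correctly explains why the implication is one-directional here, in contrast to the equivalence in Corollary~\ref{cor2}, where the weight $1/m!$ is constant along a vertical sequence.
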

We can also fix $m$ and consider the sequence $\{ A_{n,m}^{g,h}\}_n$. If this sequence is log-concave for all $m$ we say that
$\{P_n^{g,h}(x)\}_n $ is vertically log-concave.
\begin{corollary} \label{cor2}
$\{ P_n^{\tilde{g},1}(x)\}_n $ is vertically log-concave iff $\{P_n^{g,\func{id}}\,(x)\}_n $ is vertically log-concave.
\end{corollary}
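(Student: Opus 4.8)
The plan is to apply the Conversion Principle (Theorem~\ref{th1}) columnwise, fixing the coefficient index $m$ and letting the degree index $n$ vary. The crucial structural feature is that the factor $1/m!$ relating the two coefficient families in Theorem~\ref{th1} depends only on $m$ and is independent of $n$. Consequently, for each fixed $m$ the two sequences $\{A_{n,m}^{g,\func{id}}\}_n$ and $\{A_{n,m}^{\tilde{g},1}\}_n$ differ only by the positive multiplicative constant $1/m!$, and the equivalence will follow from the elementary fact that scaling a sequence by a fixed positive constant neither creates nor destroys log-concavity.

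First I would fix $m \geq 1$ and recall that, by definition, vertical log-concavity of $\{P_n^{g,\func{id}}(x)\}_n$ asserts that for every such $m$ the sequence $\{A_{n,m}^{g,\func{id}}\}_n$ is log-concave, i.e.\ $(A_{n,m}^{g,\func{id}})^2 \geq A_{n-1,m}^{g,\func{id}}\,A_{n+1,m}^{g,\func{id}}$ for all $n$. Substituting the identity $A_{n,m}^{g,\func{id}} = \frac{1}{m!}\,A_{n,m}^{\tilde{g},1}$ from Theorem~\ref{th1} into both sides, both the left-hand and the right-hand side acquire the common factor $1/(m!)^2$, which cancels. The inequality then reads exactly $(A_{n,m}^{\tilde{g},1})^2 \geq A_{n-1,m}^{\tilde{g},1}\,A_{n+1,m}^{\tilde{g},1}$, which is precisely the log-concavity of the $m$th column $\{A_{n,m}^{\tilde{g},1}\}_n$. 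Since each implication is a genuine equivalence of inequalities, the $m$th column of one family is log-concave if and only if the $m$th column of the other is.

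As $m \geq 1$ was arbitrary, taking the conjunction over all $m$ yields the claimed biconditional. I expect no genuine obstacle here: the entire argument reduces to the observation that multiplication by a fixed positive constant (here $1/m!$, positive and independent of $n$) preserves log-concavity term by term. The only bookkeeping point worth noting is that for each fixed $m$ both families share the same support in $n$, namely $A_{n,m}^{g,\func{id}} = A_{n,m}^{\tilde{g},1} = 0$ for $n < m$, so the boundary terms of the three-term inequalities line up and no degenerate cases arise.
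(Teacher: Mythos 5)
Your proof is correct and is exactly the argument the paper intends: Corollary~\ref{cor2} is stated as an immediate consequence of Theorem~\ref{th1}, and the whole point is that for fixed $m$ the factor $1/m!$ is constant in $n$, so it cancels from both sides of the log-concavity inequality, giving a genuine equivalence (in contrast to Corollary~\ref{cor1}, where the factor varies with the running index and only one direction survives). Your remark about the common support $n \geq m$ is a sensible bookkeeping check and does not change the argument.
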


\begin{proof}[Proof of Theorem \ref{th1}]
We call $\mathcal{E}_g(T):= \sum_{n=1}^{\infty} \frac{g(n)}{n} \, T^n$ the (modified) 
Eichler
integral of $G(T)$. In the case of $g(n)= \sigma(n)$ and
$T= q := \textrm{e}^{2 \pi \textrm{i} \tau}$, where $\tau$ is in
the upper complex half-plane, $\mathcal{E}_g(T)$ is the 
Eichler
integral of $\frac{1-E_2}{24}$ of the weight $2$ quasi-modular Eisenstein series $E_2(\tau)$. We refer to \cite{BOW20} for recent
work on Eichler integrals.

We prove that for all $m \in \mathbb{N}$:
\begin{eqnarray*}
\mathcal{E}_g(T)^m & = & m!  \,\, \sum_{n=m}^{\infty} A_{n,m}^{g, \func{id}}\, T^n ,  \label{P:coefficients}\\
\mathcal{E}_g(T)^m & = & \sum_{n=m}^{\infty} A_{n,m}^{\tilde{g}, 1} \, T^n . \label{Q:coefficients}
\end{eqnarray*}
The strategy of the proof is the following. 
We consider the $x$ expansion of $\sum_{n=0}^{\infty} P_{n}^{g,h}(x)\, T^n$ for $h \in \{\func{id}, \, 1 \}$.
In the domain of absolute convergence we interchange two infinite sums and compare the coefficients.
The core of the proof is the transition from the exponential series to the geometric series.
The formula utilizing the geometric series is given by
\[
\sum_{n=0}^{\infty} P_{n}^{\tilde{g},1}(x)\, T^n =  \frac{1}{1- x \, \mathcal{E}_g(T)}.
\]
This can be directly verified by comparing the Cauchy product of the two power series
$\sum_{n=0}^{\infty} P_{n}^{\tilde{g},1}(x)\, T^n $ and $1- x \, \mathcal{E}_g(T)$ and the
defining
recursion formula of $P_{n}^{\tilde{g},1}(x)$.
\newline
The formula utilizing the exponential series is given by
\[
\sum_{n=0}^{\infty} P_{n}^{g, \func{id}}(x)\, T^n =  \exp \left( x \, \mathcal{E}_g(T) \right).
\]
Suppose that the generating series
$\sum_{n=0}^{\infty} P_{n}^{g, \func{id}}(x)\, T^n $ is denoted by $F_g(x,T)$.
First, we observe that the recursion formula 
(\ref{rec}) is encoded in the functional equation
\begin{equation}
\frac{\partial}{\partial T} F_g(x,T) = F_g(x,T) \, \frac{\mathrm{d}}{\mathrm{d}T}\left( x\mathcal{E}_{g}\left( T\right) \right) .
\label{functional}
\end{equation}
Further, let $f(n):= \sum_{d \vert n} \mu(d) \, g(n/d)$, where $\mu$ is the Moebius function.
Then it can be shown by a standard procedure (logarithmic differentiation), that the Euler product
\[
\prod_{n=1}^{\infty} \left( 1 - T^n \right)^{- \frac{x f(n)}{n}}
\]
satisfies the functional equation (\ref{functional}). Finally, since 
\[
\ln \prod_{n=1}^{\infty} \left( 1 - T^n \right)^{- \frac{x f(n)}{n}} = 
\sum_{n,m=1}^{\infty} \frac{x \, f(n)}{n} \frac{T^{nm}}{m} = x \mathcal{E}_g(T),
\]
the proof has been completed.
\end{proof}
From this proof we obtain the exponential and geometric realization of $P_n^{g,h}(x)$.
\begin{corollary}\label{gen}
Let $h=\func{id}$ or $h=1$, then we have identities
(\ref{E}) and (\ref{G}) for the assigned generating series:
\begin{eqnarray*}
\sum_{n=0}^{\infty} P_n^{g, \func{id}}(x) \, T^n & = & \exp \left(x \sum_{n=1}^{\infty} g(n) \, \frac{T^n}{n} \right) ,\label{EE}\\
\sum_{n=0}^{\infty} P_n^{g,1}(x) \, T^n & = & \frac{1}{ 1 - x \sum_{n=1}^{\infty} g(n) \, T^n}. \label{GG}
\end{eqnarray*}
\end{corollary}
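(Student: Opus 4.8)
The plan is to read both identities straight off the proof of Theorem~\ref{th1}, where the two generating-function representations were the central computational input. Recall that in that proof one sets $\mathcal{E}_g(T) = \sum_{n=1}^{\infty} \frac{g(n)}{n}\, T^n$ and establishes
\[
\sum_{n=0}^{\infty} P_{n}^{g,\func{id}}(x)\, T^n = \exp\left(x\,\mathcal{E}_g(T)\right), \qquad \sum_{n=0}^{\infty} P_{n}^{\tilde g,1}(x)\, T^n = \frac{1}{1 - x\,\mathcal{E}_g(T)}.
\]
For the exponential identity~(\ref{E}) there is essentially nothing left to do: unfolding the definition of $\mathcal{E}_g$ gives $\mathcal{E}_g(T) = \sum_{n=1}^{\infty} g(n)\,\frac{T^n}{n}$, and substituting this into the left-hand display above is exactly the claimed formula~(\ref{E}).

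For the geometric identity~(\ref{G}) the only point requiring attention is that the proof of Theorem~\ref{th1} produced the geometric representation for the pair $(\tilde g, 1)$ rather than for $(g,1)$. I would therefore re-run the Cauchy-product verification used there, but now applied to $g$ itself. Concretely, write $F(x,T) := \sum_{n=0}^{\infty} P_n^{g,1}(x)\, T^n$ and $G(T) := \sum_{n=1}^{\infty} g(n)\, T^n$. The recursion~(\ref{rec}) with $h=1$ reads $P_n^{g,1}(x) = x \sum_{k=1}^n g(k)\,P_{n-k}^{g,1}(x)$ for $n \geq 1$, and recognizing the right-hand side as the $n$th coefficient of the Cauchy product $x\,G(T)\,F(x,T)$ yields the functional identity $F(x,T) - 1 = x\,G(T)\,F(x,T)$. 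Solving this linear equation gives $F(x,T) = 1/(1 - x\,G(T))$, which is precisely~(\ref{G}).

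The main, and indeed only, technical point to address is the legitimacy of these series manipulations. Since $g$ is of moderate growth, $G(T)$ is analytic at $T=0$, so for each fixed $x$ all the series converge absolutely in a common neighborhood of $T=0$, and both the Cauchy product and the term-by-term identification of coefficients are justified there; equivalently, one may regard everything as an identity of formal power series in $T$ over $\mathbb{C}[x]$, in which case no convergence question arises at all. I do not anticipate any genuine obstacle, as both formulas are immediate corollaries of the computation already carried out for Theorem~\ref{th1}.
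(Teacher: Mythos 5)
Your proposal is correct and matches the paper's (implicit) argument: the paper derives Corollary~\ref{gen} directly from the proof of Theorem~\ref{th1}, where the exponential identity appears verbatim and the geometric identity comes from the same Cauchy-product comparison with the recursion~(\ref{rec}). Your one added observation---that the proof of Theorem~\ref{th1} states the geometric formula for the pair $(\tilde g,1)$, so one must re-run (or re-index) that verification for $(g,1)$---is exactly the right way to close the small gap the paper leaves to the reader.
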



\section{Log-Concavity and Double Sequences}

\begin{definition}
Suppose that $\left\{a_n\right\}_{n=0}^{\infty}$ is
a sequence of non-negative real numbers.
A finite sequence is extended with zeros.

\begin{enumerate}
\item  The sequence is
called log-concave if $a_n^2  \geq a_{n-1} \, a_{n+1}$ for $n \geq 1$.

\item  We denote a double sequence $\mathcal{A}=\{a_{n,m}\}$
{\it horizontally log-concave} iff for every $n_0 \in \mathbb{N}$
the sequence $\{a_{n_0,m}\}$ is log-concave and
{\it vertically log-concave} iff for every $m_0 \in \mathbb{N}$
the sequence $\{a_{n,m_0}\}$ is log-concave. 

\item  If for $C>1$ fixed and
for all $m_0$ the sequence 
$\{a_{n,m_0}\}_{1 \leq n \leq C^{m_0}}$ is log-concave,
then we denote the double sequence $\mathcal{A}$ as
{\it vertically $C$-log-concave}.
\end{enumerate}
\end{definition}

We are mainly interested in double sequences 
$\mathcal{A}=\{a_{n,m}\}$ of triangular shape: 
$a_{n,m}=0$ for $m>n$ or $m=0$,
and $a_{n,m}$ are otherwise positive.

Suppose that $g$ and $h$ are
normalized arithmetic functions with positive real values. 
Suppose that $g$ is of moderate growth.
Then we assign to the family of polynomials $P_n^{g,h}(x)$ the double sequence 
$\mathcal{A}^{g,h}=\{a_{n,m}^{g,h}\}$ of triangular shape by putting 
\[
a_{n,m}^{g,h} = A_{n,m}^{g,h} \quad \text{ for } 1 \leq m \leq n,
\]
and otherwise zero. If $\mathcal{A}^{g,h}$ is horizontally or vertically
log-concave or
vertically $C$-log-concave we give
$P_n^{g,h}(x)$ the same label.
We are interested in the D'Arcais polynomials $P_n^{\sigma, \func{id}}(x)$ which
are conjectured to be horizontally log-concave. In the next section we provide counter-examples for
vertical log-concavity and put this observation in the context of a conjecture by Hong and Zhang \cite{HZ20}, addressing
the vertical $C$-log-concavity.

Let us first give some examples, which may serve as a source
of ideas to prove the Hong and Zhang conjecture.
Example 1 leads to:
\begin{proposition}
Let $g(n)=h(n)$ equal to $1(n)$ or
$\func{id}\left( n\right) $. Then the assigned double sequences  $\mathcal{A}^{1,1}$ and $\mathcal{A}^{\func{id}, \func{id}}$ 
are horizontally and vertically log-concave.
\end{proposition}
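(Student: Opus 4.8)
The plan is to reduce everything to the explicit binomial descriptions of the two arrays and then use the conversion principle to pass cheaply from one to the other. By Example~1 (equation~(\ref{eq:laguerre})) we already have the closed forms $a_{n,m}^{1,1} = \binom{n-1}{m-1}$ and $a_{n,m}^{\func{id},\func{id}} = \frac{1}{m!}\binom{n-1}{m-1}$, so $\mathcal{A}^{1,1}$ is Pascal's triangle (suitably indexed) and $\mathcal{A}^{\func{id},\func{id}}$ is the same array scaled column-wise by $1/m!$. I would settle $\mathcal{A}^{1,1}$ directly and then obtain $\mathcal{A}^{\func{id},\func{id}}$ almost for free.

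First I would establish horizontal log-concavity of $\mathcal{A}^{1,1}$: for fixed $n_0$ this is the classical fact that a single row $\binom{n_0-1}{m-1}$ of Pascal's triangle is log-concave in $m$, i.e.\ $\binom{N}{k}^2 \geq \binom{N}{k-1}\binom{N}{k+1}$. Next, vertical log-concavity of $\mathcal{A}^{1,1}$: for fixed $m_0$ I would verify $\binom{n-1}{m_0-1}^2 \geq \binom{n-2}{m_0-1}\binom{n}{m_0-1}$ directly. Writing $k=m_0-1$, the consecutive ratio $\binom{n}{k}/\binom{n-1}{k} = \frac{n}{n-k}$ is decreasing in $n$, which is precisely log-concavity along a column. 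The edge cases (where one neighbour is a padded zero) are immediate, since the squared term is positive while each product involving a zero vanishes, and the array has no internal zeros to obstruct this.

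For $\mathcal{A}^{\func{id},\func{id}}$ I would avoid repeating any computation by invoking the conversion principle. Since $g=\func{id}$ gives $\tilde g(n) = \func{id}(n)/n = 1 = 1(n)$, we have $P_n^{\tilde g,1} = P_n^{1,1}$, and both $g=\func{id}$ and $g=1$ are of moderate growth, so Theorem~\ref{th1} applies. Corollary~\ref{cor1} then transfers horizontal log-concavity from $P_n^{1,1}$ to $P_n^{\func{id},\func{id}}$, and Corollary~\ref{cor2} gives the equivalence of vertical log-concavity between the two families; hence both properties for $\mathcal{A}^{\func{id},\func{id}}$ follow from those already proved for $\mathcal{A}^{1,1}$. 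Alternatively, one argues directly that the extra weight $1/m!$ is itself log-concave in $m$ (its ratio test gives $\frac{m+1}{m} > 1$) and constant in $n$, so multiplying the log-concave array $\binom{n-1}{m-1}$ by it preserves both horizontal and vertical log-concavity, since a product of nonnegative log-concave sequences is log-concave.

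I expect no serious obstacle here: the proposition rests entirely on the two classical facts that the rows and the columns of Pascal's triangle are log-concave. The only mild subtlety is the bookkeeping at the boundary of the triangular array, but the no-internal-zeros structure collapses every boundary inequality to ``positive $\geq 0$''. If any step deserves emphasis it is the vertical (column) case, which genuinely uses the monotonicity of $\frac{n}{n-k}$, whereas the horizontal case is standard.
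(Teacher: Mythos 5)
Your proposal is correct and follows essentially the same route as the paper: both use the closed form $a_{n,m}^{1,1}=\binom{n-1}{m-1}$, the classical horizontal log-concavity of Pascal's rows, the vertical inequality $\binom{n}{k}^2\geq\binom{n+1}{k}\binom{n-1}{k}$ (which you justify via the ratio $\frac{n}{n-k}$), and then Corollaries \ref{cor1} and \ref{cor2} applied with $\tilde{\func{id}}=1$ to transfer both properties to $\mathcal{A}^{\func{id},\func{id}}$. Even your alternative column-weight argument mirrors the paper's closing remark that the result ``could also be obtained by a direct calculation.''
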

\begin{proof}
We recall that 
\[
a_{n,m}^{1,1} = \binom{n-1}{m-1}.
\]
Then $\mathcal{A}^{1,1}$ consists of binomial coefficients, and therefore are
(horizontally) log-concave.
Binomial coefficients are also vertically log-concave:
\[
{\binom{n}{k}}^2 \geq \binom{n+1}{k} \, \binom{n-1}{k}.
\]
Horizontal and vertical log-concavity of $\mathcal{A}^{1,1}$ implies by Corollary \ref{cor1} and Corollary \ref{cor2} the proof of the
proposition. This could also be obtained by a direct calculation.
\end{proof}

\begin{example}
Let $s\left( n\right) =n^2$. We recall from \cite{HN19A} that 
$$ A_{n,m}^{s, \func{id}} = \frac{1}{m!} \binom{n+m-1}{2m-1}.$$
\end{example}

We obtain:
\begin{proposition}
The double sequences  $\mathcal{A}^{\func{id},1}$ and $\mathcal{A}^{s, \func{id}}$ 
are horizontally and vertically log-concave.
\end{proposition}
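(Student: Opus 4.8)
The plan is to reduce both claims to one explicit binomial formula and a pair of elementary ratio estimates. First I would pin down the entries of $\mathcal{A}^{\func{id},1}$. Since $s(n)=n^{2}$ gives $\tilde{s}(n)=s(n)/n=n=\func{id}(n)$, the Conversion Principle (Theorem~\ref{th1}) yields $A_{n,m}^{s,\func{id}}=\tfrac{1}{m!}A_{n,m}^{\func{id},1}$. Comparing this with the formula $A_{n,m}^{s,\func{id}}=\tfrac{1}{m!}\binom{n+m-1}{2m-1}$ from the preceding example gives $A_{n,m}^{\func{id},1}=\binom{n+m-1}{2m-1}$ for $1\le m\le n$. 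By Corollaries~\ref{cor1} and~\ref{cor2} both log-concavity properties transfer from $\mathcal{A}^{\func{id},1}$ to $\mathcal{A}^{s,\func{id}}$ (the column weight $1/m!$ is a positive constant, neutral for the vertical direction and only relaxing the horizontal one). Hence it suffices to prove that $\mathcal{A}^{\func{id},1}=\{\binom{n+m-1}{2m-1}\}$ is horizontally and vertically log-concave.

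For vertical log-concavity I would fix $m_{0}$, put $k=2m_{0}-1$ and $N=n+m_{0}-1$, so that the $m_{0}$-th column is $\binom{N}{k}$ with $N$ ranging over $k,k+1,\dots$. The desired inequality $\binom{N}{k}^{2}\ge\binom{N-1}{k}\binom{N+1}{k}$ is immediate from $\binom{N}{k}/\binom{N-1}{k}=N/(N-k)$, which is decreasing in $N$; equivalently, clearing denominators reduces it to $k\ge 0$. The only boundary term, at $n=m_{0}$, has vanishing predecessor and holds trivially.

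For horizontal log-concavity I would fix $n$ and set $b_{m}=\binom{n+m-1}{2m-1}$. A direct cancellation gives the consecutive ratio $b_{m+1}/b_{m}=(n^{2}-m^{2})/\bigl(2m(2m+1)\bigr)$. Writing the log-concavity condition as $b_{m}/b_{m-1}\ge b_{m+1}/b_{m}$ and clearing the positive denominators, the inequality collapses to $(4m-1)\,n^{2}+m(m-1)\ge 0$, which is manifest. The extreme cases $m=1$ (vanishing predecessor $b_{0}$) and $m=n$ (vanishing successor $b_{n+1}$) are immediate.

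All computations are elementary, so the real content is organizational: the main technical step is the horizontal ratio manipulation, and the point to verify is that the polynomial in $n$ produced after cross-multiplication has nonnegative coefficients, so the inequality in fact holds with room to spare. To finish, I would invoke Corollary~\ref{cor1} for the horizontal direction and Corollary~\ref{cor2} for the vertical direction to transfer both properties from $\mathcal{A}^{\func{id},1}$ to $\mathcal{A}^{s,\func{id}}$, which completes the proof.
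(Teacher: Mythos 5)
Your proposal is correct and follows essentially the same route as the paper: reduce both sequences, via the Conversion Principle and Corollaries~\ref{cor1} and~\ref{cor2}, to the single binomial double sequence $\left\{ \binom{n+m-1}{2m-1}\right\} _{n,m}$, and then verify its horizontal and vertical log-concavity directly. The paper dismisses that verification as ``straightforward''; your ratio computations (vertical ratio $N/(N-k)$ decreasing, horizontal cross-multiplication collapsing to $(4m-1)n^{2}+m(m-1)\geq 0$) are exactly the missing details, and they check out.
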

\begin{proof}
It is sufficient to show that the double sequence $\{ \binom{n+m-1}{2m-1} \}_{n,m}$ is horizontally and vertically log-concave.
This is shown in a straightforward manner.
\end{proof}

\begin{example}
The polynomials $n! \, P_n^{1,\func{id}}(x)$ are obtained by the raising factorials. We have
\[
P_n^{1, \func{id}}(x) = \frac{ x (x+1) \ldots (x+n-1)}{n!}.
\label{eq:stirlingpolynomials}
\]
Suppose that $S\left( n,m \right) =
 \left[\begin{array}{c} n \\ m
\end{array}
\right] $ 
is the unsigned
Stirling number of the first kind and ${\widetilde{1}}(n) = \frac{1}{n}$. Then
\[
A_{n,m}^{1, \func{id}} = \frac{1}{n!} 
\left[\begin{array}{c} n \\ m
\end{array} \right] 
\text{ and } 
A_{n,m}^{{\widetilde{1}}, 1} = \frac{m!}{n!} \left[\begin{array}{c} n \\ m
\end{array} \right]. 
\]
Recall that the unsigned Stirling numbers of the first kind
satisfy a three term recursion formula,
similar to the recursion formula of binomial coefficients:
\[
S(n,m) = (n-1) S(n-1,m) + S(n-1,m-1).
\]
\end{example}

This example is quite interesting. As a first result we have:
\begin{proposition}
The double sequences $\mathcal{A}^{1, \func{id}}$ and 
$\mathcal{A}^{\tilde{1}, 1}$ 
are horizontally log-concave but not vertically log-concave.
\end{proposition}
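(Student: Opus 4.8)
The plan is to handle the two assertions by different means, exploiting the explicit descriptions $A_{n,m}^{1,\func{id}} = \frac{1}{n!}\,S(n,m)$ and $A_{n,m}^{\tilde{1},1} = \frac{m!}{n!}\,S(n,m)$ recorded in the preceding Example, where $S(n,m)$ is the unsigned Stirling number of the first kind. For the horizontal direction the decisive fact is that $n!\,P_n^{1,\func{id}}(x)=x(x+1)\cdots(x+n-1)$ has only the real roots $0,-1,\ldots,-(n-1)$, so that Newton's inequalities apply to its coefficient sequence $\{S(n,m)\}_m$; for the vertical direction I would simply inspect the first column and observe that it is log-\emph{convex}.

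For the horizontal log-concavity I would first record Newton's inequalities for the real-rooted polynomial $\sum_m S(n,m)\,x^m$. Comparing the normalized coefficients $S(n,m)/\binom{n}{m}$ gives, for $2\leq m\leq n-1$,
\[
S(n,m)^2 \;\geq\; \frac{(n-m+1)(m+1)}{m\,(n-m)}\,S(n,m-1)\,S(n,m+1),
\]
the cases $m=1$ and $m=n$ being trivial since the double sequence is extended by zeros. As $\frac{n-m+1}{n-m}\geq 1$, the displayed ratio exceeds $1$, so $S(n,m)^2\geq S(n,m-1)\,S(n,m+1)$, and since the factor $1/n!$ is constant along a row, $\mathcal{A}^{1,\func{id}}$ is horizontally log-concave. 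For $\mathcal{A}^{\tilde{1},1}$ the weights $m!/n!$ show that horizontal log-concavity is equivalent to $S(n,m)^2\geq \frac{m+1}{m}\,S(n,m-1)\,S(n,m+1)$; since $\frac{(n-m+1)(m+1)}{m(n-m)}\geq \frac{m+1}{m}$, the same Newton bound suffices. Alternatively, once $\mathcal{A}^{\tilde{1},1}$ is known to be horizontally log-concave, Corollary~\ref{cor1} with $g=1$ returns the claim for $\mathcal{A}^{1,\func{id}}$ at once.

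For the failure of vertical log-concavity I would use the boundary value $A_{n,1}^{g,h}=g(n)/h(n)$ from Section~2. Both pairs $(g,h)=(1,\func{id})$ and $(g,h)=(\tilde{1},1)$ give the identical first column $A_{n,1}=1/n$, so the vertical sequence at $m_0=1$ is $1,\frac12,\frac13,\ldots$. This is strictly log-convex, as $(1/n)^2=1/n^2<1/(n^2-1)$; already at $n=2$ one has $(\frac12)^2=\frac14<\frac13=1\cdot\frac13$. Hence neither $\mathcal{A}^{1,\func{id}}$ nor $\mathcal{A}^{\tilde{1},1}$ is vertically log-concave. By Corollary~\ref{cor2} it would even suffice to refute one of the two, but here the two first columns coincide.

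The only genuine work lies in the horizontal direction, and the step I expect to be the main obstacle is the bookkeeping of the binomial and factorial weights when passing from Newton's normalized inequality to ordinary log-concavity: one must verify that the Newton ratio $\frac{(n-m+1)(m+1)}{m(n-m)}$ dominates both the target ratio $1$ for $\mathcal{A}^{1,\func{id}}$ and the target ratio $\frac{m+1}{m}$ for $\mathcal{A}^{\tilde{1},1}$. The vertical statement, by contrast, is immediate once one notices that the first column is the sequence $\{1/n\}$, which is log-convex rather than log-concave.
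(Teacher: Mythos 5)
Your proof is correct, but it takes a genuinely different route from the paper's in both halves. For the horizontal log-concavity of $\mathcal{A}^{\tilde{1},1}$, the paper explicitly says Newton's theorem cannot be applied (since the real-rootedness of $P_n^{\tilde{1},1}(x)$ is unknown) and instead invokes Sibuya's result (\cite{Si88}, Corollary~3.1) to get $m\,S(n,m)^2 \geq (m+1)\,S(n,m+1)\,S(n,m-1)$. You sidestep this by applying Newton's inequalities in their sharp, binomially normalized form to the real-rooted polynomial $\sum_m S(n,m)\,x^m = x(x+1)\cdots(x+n-1)$, noting that the resulting factor $\frac{(n-m+1)(m+1)}{m(n-m)}$ dominates both the target ratio $1$ needed for $\mathcal{A}^{1,\func{id}}$ and the target ratio $\frac{m+1}{m}$ needed for $\mathcal{A}^{\tilde{1},1}$. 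This is a nice observation the paper misses: one does not need $P_n^{\tilde{1},1}$ itself to be real-rooted, because the quantitative Newton inequality for the rising factorial already carries the extra factor $\frac{m+1}{m}$. What you give up is only strictness of the inequality, which the proposition does not require; what you gain is a self-contained proof with a single application of Newton covering both sequences. For the failure of vertical log-concavity, the paper tabulates the first six rows, locates the failure at $(m,n_0)=(1,2)$ and $(2,5)$, and later devotes a separate harmonic-number computation to column $m=2$; you instead use the boundary formula $A_{n,1}^{g,h}=g(n)/h(n)$ to see that both first columns equal $1/n$, and the strict log-convexity $1/n^2 < 1/(n^2-1)$ refutes vertical log-concavity for both double sequences simultaneously and for every $n\geq 2$. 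Both arguments are sound; yours is shorter and citation-free, while the paper's yields strict horizontal inequalities and the extra information that column $m=2$ fails as well.
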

\begin{proof}
The double sequence $\mathcal{A}^{1, \func{id}}$ is horizontally
log-concave according to the theorem by Newton.
To prove that the double sequence $\mathcal{A}^{\tilde{{1}}, 1}$ 
is horizontally log-concave we cannot apply
Newton's theorem, since we do not know if 
$P_n^{\tilde{{1}},1}(x)$ has only real roots.
$\mathcal{A}^{\tilde{{1}}, 1}$ is horizontally
log-concave iff
\begin{equation}
m  S(n,m)^2 \geq (m+1) \,S(n,m+1) \, S(n,m-1).
\label{eq:horizontal1}
\end{equation}
Since $S\left( n,m\right) =0$ only for $m\leq 0$ or
$m\geq n+1$, the equation (\ref{eq:horizontal1}) is
true for $m\leq 1$ and $m\geq n$.

For the rest of this part of the proof we now assume
$2\leq m\leq n-1$, which implies that all in  (\ref{eq:horizontal1})
involved Stirling numbers are non-zero. From this we obtain
\begin{equation}
mS\left( n,m\right) /S\left( n,m-1\right) \geq \left( m+1\right) S\left( n,m+1\right) /S\left( n,m\right).
\label{eq:horizontal2}
\end{equation}
Sibuya (\cite{Si88}, Corollary~3.1) has shown that
(\ref{eq:horizontal2})
holds strictly which proves our
claim.

To disprove the vertical log-concavity, we record the coefficients of the first six polynomials 
$P_n^{1, \func{id}}(x)$.
\[
\begin{array}{|c|c|c|c|c|c|c|}
\hline
 n! \, {A}_{n,m}^{1, \func{id}} & m=6 & m=5&m=4 & m=3 & m=2 & m=1 \\ \hline
n=6 & 1&  15&85  & 225 & 274 & 120\\ \hline

n=5 & 0&1&10  & 35 & 50 & 24\\ \hline

n=4 & 0&0& 1 & 6 & 11 & 6\\ \hline

n=3 & 0&0& 0 & 1 & 3 & 2\\ \hline
n=2 &0& 0& 0 & 0 & 1  & 1 \\ \hline
n=1 &0&  0&   0 & 0 & 0 & 1 \\ \hline
\end{array}
\]
Let $m=1$, then $n_0=2$ is the smallest $n$, such that
\begin{equation}\label{fail m=1}
\left( A_{n,1}^{\tilde{{1}}, 1}\right)^2 
\geq 
A_{n+1,1}^{\tilde{{1}}, 1} \,\,
A_{n-1,1}^{\tilde{{1}}, 1}.
\end{equation}
fails. Moreover, (\ref{fail m=1}) fails for all $n \geq 2$.

Let $m=2$, then $n=n_0=5$ is the smallest $n$, such that 
\begin{equation}\label{fail m=2}
\left( A_{n,2}^{{\tilde{1}}, 1}\right)^2 
\geq
A_{n+1,2}^{\tilde{{1}}, 1} \,\,
A_{n-1,2}^{\tilde{{1}}, 1}.
\end{equation}
fails.
\end{proof}

\begin{remark}
From (\cite{HNT20}, Example~2.3)
we obtain
$P_{n+1}^{1,\func{id}}\left( x\right) =\frac{n+x}{n+1}P_{n}^{1,\func{id}}\left( x\right) $.
There is no obvious finite recursion formula for
$P_{n}^{\tilde{{1}},1}\left( x\right) $.
\end{remark}

In the following we show that also in the case of $m=2$ the inequality (\ref{fail m=2}) fails
for all $n \geq n_0=5$.
Let $H(n)=\sum_{k=1}^n \frac{1}{k}$ denote the $n$th Harmonic number.
The sequence of Harmonic numbers is log-concave. It is easy to see that
\[
S(n,1)= (n-1)! \text{ and } S(n,2) = (n-1)! \,\, H(n-1).
\]
Thus, $A_{n,2}^{\tilde{{1}}, 1} = \frac{2}{n} \, H(n-1)$. We study the function $$\Delta(n):= (n^2-1) H(n-1)^2 - n^2 H(n) H(n-2).$$
Since $H(n)= H(n-1) + \frac{1}{n}$, we obtain:
\[
\label{quad}
\Delta(n) = \frac{n}{n-1} ( H(n-1)+ 1) - H(n-1)^2.
\]
Let $ n \geq 2$.
We have for $2\leq n\leq 4$ the bounds $1\leq H\left( n-1\right) <2$. Thus,
\begin{eqnarray*}
\Delta \left( n\right) &\geq &\frac{4}{3}\left( H\left( n-1\right) +1\right) -\left( H\left( n-1\right) \right) ^{2}\\
&=&\frac{1}{3}\left( 2+3H\left( n-1\right) \right) \left( 2-H\left( n-1\right) \right) >0.
\end{eqnarray*}
Otherwise, $H(n-1) >2$, which leads to
\begin{eqnarray*}
\Delta \left( n\right) &\leq &\frac{4}{3}\left( H\left( n-1\right) +1\right) -\left( H\left( n-1\right) \right) ^{2}\\
&=&\frac{1}{3}\left( 2+3H\left( n-1\right) \right) \left( 2-H\left( n-1\right) \right) <0.
\end{eqnarray*}

\section{On Hong and Zhang's Conjecture}

Recently, Hong and Zhang \cite{HZ20} presented an interesting conjecture which implies
that the Nekrasov--Okounkov polynomials are unimodal for large $n$.

\subsection{Nekrasov--Okounkov polynomials}
In 2003 Nekrasov and Okounkov, in an arXiv preprint (\cite{NO03}, formula (6.12)), announced a remarkable new type
of hook length formula, based on their work on random partitions and the Seiberg--Witten theory (final publication \cite{NO06}).
Shortly after their discovery,  Westbury \cite{We06} and Han \cite{Ha10} also
spotted the hook length formula in connection with the Macdonald identities.
Westbury utilized properties of twisted universal characters and Han properties of $t$-cores.

The hook length formula relates the sum over products of partition hook lengths \cite{Ma95,Fu97} to 
the coefficients of complex powers of Euler products \cite{Ne55,Se85,HNW18}, which is essentially a power of the Dedekind eta function.

Suppose that $\lambda$ is
a partition of $n$ denoted by $\lambda \vdash n$ with weight $|\lambda|=n$. We denote by
$\mathcal{H}(\lambda)$ the multiset of hook lengths associated to $\lambda$ and by $\mathcal{P}$ the set of all partitions.
The Nekrasov--Okounkov hook length formula is given by
\begin{equation} \label{ON}
\sum_{ \lambda \in \mathcal{P}} q^{|\lambda|} \prod_{ h \in \mathcal{H}(\lambda)} 
\left(  1 - \frac{z}{h^2} \right) =   \prod_{m=1}^{\infty} \left( 1 - q^m \right)^{z-1}.
\end{equation}
The identity (\ref{ON}) is valid for all $z \in \mathbb{C}$.
The Dedekind eta function $\eta(\tau)$ is given by $q^{\frac{1}{24}} \prod_{m=1}^{\infty} \left( 1 - q^m \right)$ (see \cite{On03}).
In \cite{HN18} we revised and refined three conjectures posted by Amdeberhan \cite{Am15}. 
The formula (\ref{ON}) is built up of a family of polynomials $Q_n(x)$.
The $n$th Nekrasov--Okounkov polynomial is given by
\begin{equation}\label{Q}
Q_n(x) := 
\sum_{ \lambda \vdash n} 
\prod_{ h \in \mathcal{H}(\lambda)} 
\left(  \frac{h^2 + x}{h^2} \right) \in \mathbb{C}[x].
\end{equation}
Note that all the coefficients of $Q_n(x)$ are non-negative.
Suppose that $Q_n(x)$ has only real roots (previous Conjecture 2), then we know already
from Newton that this implies that
$Q_n(x)$ is log-concave, and hence unimodal (previous Conjecture 3).
We disproved Conjecture 2, and used PARI/GP to check
that $Q_n(x)$ is log-concave
for $n \leq 1500$.
Note that the roots of $Q_n(x)$ are directly related to the Lehmer conjecture \cite{Le47, HNW18}.

\subsection{D'Arcais polynomials}
In 1913 D'Arcais \cite{DA13} studied a sequence of polynomials $P_n(x)$ (which are denoted D'Arcais polynomials \cite{We06}):
\[
\sum_{n=0}^{\infty} P_{n}\left( x\right) \, q^{n}
=  \prod_{n=1
}^{\infty} \left( 1 - q^n \right)^{-x}. \label{Arcais}
\]
The coefficients are called D'Arcais numbers \cite{Co74}.
Newman and Serre \cite{Ne55, Se85} studied the polynomials in the context of modular forms.
Serre proved his famous theorem on lacunary modular forms, utilizing the factorization of $P_n(x)$ for $ 1 \leq n \leq 10$ over $\mathbb{Q}$.

\subsection{Hong and Zhang's conjecture}
Hong and Zhang \cite{HZ20} investigated
a conjecture, published by Heim and Neuhauser \cite{HN18} on the 
unimodality of the Nekrasov--Okounkov polynomials and the related log-concavity (see also \cite{Am15}).

We consider the generating series $$f(q):= \sum_{n=1}^{\infty} \tilde{\sigma}(n) \, q^n = 
\sum_{n=1}^{\infty} \sigma(n) \, \frac{q^n}{n}.$$ 
Let $m \in \mathbb{N}$. 
We denote by $b_{m,n}$ the coefficients
of the $q$-expansion of the $m$th power of $f(q)$ and we put otherwise $b_{m,n}=0$.
We follow \cite{HZ20} with $m=k$ and $b_{m,n}= c_{n,k}$.
\newline
\newline

\begin{Conjecture}[Hong, Zhang]
There exists a constant $C>1$, such that for all $m \geq 2$ and $1 \leq n \leq C^m$:
\[
b_{m,n}^2 \geq b_{m,n-1} \, b_{m,n+1}.
\]
\end{Conjecture}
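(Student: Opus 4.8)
The plan is to translate the conjecture into the coefficient language of Section~2 and then to split the window $1\le n\le C^m$ into an initial segment and a bulk, treating each with a different tool.

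First I would invoke the geometric realization established in the proof of Theorem~\ref{th1}. With $g=\sigma$ one has $\mathcal{E}_\sigma(q)=f(q)$, and that proof gives
\[
f(q)^m=\mathcal{E}_\sigma(q)^m=\sum_{n\ge m}A_{n,m}^{\tilde{\sigma},1}\,q^n ,
\]
so that $b_{m,n}=A_{n,m}^{\tilde{\sigma},1}=m!\,A_{n,m}^{\sigma,\func{id}}$ by the Conversion Principle. Hence the conjecture is exactly the vertical $C$-log-concavity of the D'Arcais double sequence $\mathcal{A}^{\sigma,\func{id}}$, and by Corollary~\ref{cor2} the factor $m!$ is irrelevant. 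Factoring $f(q)=q\,G_0(q)$ with $G_0(q)=\sum_{j\ge0}\tilde{\sigma}(j+1)\,q^j$ and writing $G_0(q)^m=\sum_{k\ge0}d_{m,k}\,q^k$, the claim reduces, since $b_{m,m+k}=d_{m,k}$, to
\[
d_{m,k}^2\ge d_{m,k-1}\,d_{m,k+1}\qquad(1\le k\le C^m-m).
\]

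For the initial segment I would use $G_0(q)^m=\exp\!\big(m\log G_0(q)\big)$: writing $\log G_0(q)=\sum_{j\ge1}c_j q^j$ with $c_1=\tilde{\sigma}(2)=\tfrac32$, the exponential formula exhibits $d_{m,k}$ as a polynomial in $m$ of degree $k$ with leading coefficient $c_1^{\,k}/k!$. Comparing leading terms gives
\[
\frac{d_{m,k}^2}{d_{m,k-1}\,d_{m,k+1}}\longrightarrow\frac{k+1}{k}>1 ,
\]
so log-concavity holds with slack once $m$ is large relative to $k$; estimating the partial-Bell-polynomial corrections should promote this to an honest inequality for all $k$ below a threshold that grows with $m$. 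For the complementary bulk range I would pass to the saddle point of $b_{m,n}=\frac{1}{2\pi i}\oint f(q)^m\,q^{-n-1}\,dq$, exploiting the singularity $f(q)\sim\frac{\pi^2}{6}\,(1-q)^{-1}$ as $q\to1^-$; a uniform local-limit/transfer estimate should yield a smooth, Gaussian-type local profile in $n$, which is automatically log-concave. Equivalently, one may iterate the logarithmic-derivative recursion $k\,d_{m,k}=\sum_{j=1}^{k}\big((m+1)j-k\big)\tilde{\sigma}(j+1)\,d_{m,k-j}$ and show that the ratios $d_{m,k+1}/d_{m,k}$ are non-increasing throughout the admissible range.

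The hard part will be uniformity across the exponentially long window $n\le C^m$, together with pinning down an admissible constant $C$. The obstruction is structural: the weight sequence $\tilde{\sigma}$ is itself not log-concave---for instance $\tilde{\sigma}(3)^2=\tfrac{16}{9}<\tfrac{21}{8}=\tilde{\sigma}(2)\,\tilde{\sigma}(4)$---so no term-by-term comparison can succeed, and one must quantify precisely how convolving $m$ copies smooths out these local defects and guarantees that none survives before $n=C^m$. Matching the initial-segment estimate to the saddle-point regime with error terms controlled uniformly in both $m$ and $n$ is, I expect, the crux, and the reason the statement remains only conjectural.
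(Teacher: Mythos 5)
The statement you were asked to prove is not proved in the paper at all: it is the Hong--Zhang conjecture, recorded there as an open problem. The only rigorous result the paper establishes about it is the equivalence theorem of Section 4 (Theorem 2): the conjecture holds for a given $C>1$ if and only if the double sequence $\mathcal{A}^{\sigma,\func{id}}$ attached to the D'Arcais polynomials is vertically $C$-log-concave, if and only if $\mathcal{A}^{\tilde{\sigma},1}$ is. Your first paragraph reconstructs exactly this equivalence, correctly, via the geometric realization $f(q)^m=\sum_{n\ge m}A_{n,m}^{\tilde{\sigma},1}\,q^n$ and the Conversion Principle (Theorem \ref{th1}); that part is sound and matches the paper's Theorem 2.

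Everything after that is a program, not a proof, and you concede as much in your final paragraph. Concretely, the two regimes you propose do not meet. The exponential-formula argument shows $d_{m,k}$ is a polynomial in $m$ of degree $k$ with leading coefficient $\left( 3/2\right) ^{k}/k!$, so the ratio $d_{m,k}^2/\left( d_{m,k-1}d_{m,k+1}\right) $ tends to $(k+1)/k$ as $m\to\infty$ for \emph{fixed} $k$; but the lower-order terms of these polynomials grow with $k$, so this argument can only deliver log-concavity for $k$ bounded by some slowly growing function of $m$ --- far short of the exponentially long window $n\le C^m$ that the conjecture demands. The saddle-point/local-limit step that would have to cover the bulk is only named, never executed, and its difficulty is precisely the uniformity in $m$ and $n$ that you yourself flag as the crux; your observation that $\tilde{\sigma}$ is not log-concave (indeed $\tilde{\sigma}(3)^2=16/9<21/8=\tilde{\sigma}(2)\tilde{\sigma}(4)$) correctly explains why no termwise argument can work, but explaining an obstruction is not overcoming it. So the net rigorous content of your proposal is the reduction the paper already proves as Theorem 2, plus unproven analytic claims; the conjecture itself remains open both in the paper and in your write-up.
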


At the end of their paper they offer evidence and also remark that it is very likely that $C=2$ may fulfill their
conjecture. Finally, they prove that the validity of their conjecture implies
that
the Nekrasov--Okounkov polynomials are unimodal for large degrees.
\subsection{Vertical version of the Hong--Zhang conjecture}
The following identity puts the numbers
$b_{m,n}$ in the context of double sequences 
derived by polynomials defined by the recursion (\ref{rec}). 
We consider the $q$-expansion of reciprocal power series \cite{HN19B}, induced by normalized arithmetic functions $g$,
which provides the link to the polynomials $P_n^{g,1}(x)$:
\[
\frac{1}{1 - x \sum_{n=1}^{\infty} g(n) \, q^n} = \sum_{n=0}^{\infty} P_n^{g, 1}(x) \, q^n.
\]
Thus, the numbers $b_{m,n}$ are the $m$th
coefficients of the polynomial $P_n^{\tilde{\sigma}, 1} (x)$.
The duality property shows that $b_{m,n}/m!$ is also equal to the
$m$th coefficients $A_{n,m}$ of the D'Arcais polynomials.
Hence, we immediately obtain:
\begin{theorem}
Let $C>1$. The Hong--Zhang conjecture with $C>1$ is true 
iff the double sequence
$\mathcal{A}^{\sigma, \func{id}}$ attached to the D'Arcais polynomials is
vertically $C$-log-concave 
iff the double sequence
$\mathcal{A}^{\widetilde{\sigma}, 1}$ is vertically $C$-log-concave. 
\end{theorem}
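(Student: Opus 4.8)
The plan is to collapse all three statements onto a single family of numerical inequalities for the array $(b_{m,n})$, using that passing between the three formulations rescales each column by a positive constant and hence preserves log-concavity column by column. First I would record the dictionary. The paragraph preceding the theorem identifies $b_{m,n}$ as the $m$th coefficient of $P_n^{\widetilde{\sigma},1}(x)$, so $b_{m,n}=a_{n,m}^{\widetilde{\sigma},1}$, where $a_{n,m}^{\widetilde{\sigma},1}=A_{n,m}^{\widetilde{\sigma},1}$ for $1\le m\le n$ and vanishes otherwise. The Conversion Principle (Theorem~\ref{th1}) supplies $A_{n,m}^{\sigma,\func{id}}=\tfrac{1}{m!}A_{n,m}^{\widetilde{\sigma},1}$, whence
\[
a_{n,m}^{\sigma,\func{id}}=\frac{1}{m!}\,b_{m,n}\qquad\text{and}\qquad a_{n,m}^{\widetilde{\sigma},1}=b_{m,n}.
\]

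Next I would invoke the elementary fact that log-concavity of a sequence is unchanged when every term is multiplied by one and the same positive constant. Fixing a column index $m_0$, the factor $1/m_0!$ is independent of $n$, so the vertical log-concavity inequality $(a_{n,m_0}^{\sigma,\func{id}})^2\ge a_{n-1,m_0}^{\sigma,\func{id}}\,a_{n+1,m_0}^{\sigma,\func{id}}$ becomes, after cancelling the common factor $1/(m_0!)^2$, exactly $b_{m_0,n}^2\ge b_{m_0,n-1}\,b_{m_0,n+1}$. This is simultaneously the vertical log-concavity inequality for $\mathcal{A}^{\widetilde{\sigma},1}$ and the Hong--Zhang inequality. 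Thus for each fixed pair $(m_0,n)$ the three inequalities are literally identical, their truth values agree pointwise, and it remains only to confirm that the three statements quantify over the same set of pairs.

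I would then match the quantifier ranges. Vertical $C$-log-concavity of either array requires the inequality for all $m_0$ and all $n$ in the window $1\le n\le C^{m_0}$; the convention of extending a finite sequence by zeros means only interior indices impose a genuine constraint, and this index set coincides with $\{(m,n):m\ge 2,\ 1\le n\le C^{m}\}$ appearing in the Hong--Zhang conjecture, save for the single row $m_0=1$. Feeding the pointwise identity of the previous step into this common quantifier yields both equivalences of the chain at once.

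The one point I would treat with care---the only place the argument is not pure bookkeeping---is the boundary row $m_0=1$, which the vertical definition formally includes but Hong--Zhang omits. There $b_{1,n}=\sigma(n)/n$, a sequence that is not log-concave for all $n$, so one must check that inside the window $1\le n\le C$ it produces no constraint beyond those already accounted for. For the values of $C$ relevant to the conjecture this reduces to verifying log-concavity on a short initial segment, where it holds, while the zero-extension convention handles the top of the window; once this is settled, the stated triple equivalence follows immediately.
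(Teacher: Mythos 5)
Your proposal follows essentially the same route as the paper's proof: identify $b_{m,n}$ with the $m$th coefficient of $P_n^{\widetilde{\sigma},1}(x)$ via the geometric generating series, apply the Conversion Principle $A_{n,m}^{\sigma,\func{id}}=\frac{1}{m!}A_{n,m}^{\widetilde{\sigma},1}$, and note that rescaling each column by the positive constant $1/m_0!$ (independent of $n$) leaves log-concavity in $n$ untouched. The paper compresses exactly this into ``Hence, we immediately obtain,'' so your explicit bookkeeping of the quantifier ranges is in fact more careful than the published argument.

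The one place where your proposal goes wrong is precisely the edge case you flagged, the row $m_0=1$. There $a_{n,1}^{\sigma,\func{id}}=a_{n,1}^{\widetilde{\sigma},1}=b_{1,n}=\sigma(n)/n$, and already at $n=3$ one has $\left(\frac{4}{3}\right)^2=\frac{16}{9}<\frac{3}{2}\cdot\frac{7}{4}=\frac{21}{8}$, so this row is \emph{not} log-concave on the window $1\le n\le C$ once $C\ge 3$ (or $C\ge 4$ if one uses the paper's zero-extension convention to trivialize the inequality at the top of the window). Your claim that log-concavity ``holds on a short initial segment'' is therefore only valid for small $C$, such as the value $C=2$ favored by Hong and Zhang, whereas the theorem fixes an arbitrary $C>1$. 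So your patch does not prove the literal statement for all $C>1$; the intended reading, and the one implicit in the paper's one-line proof, is that the vertical $C$-log-concavity condition is matched against Hong--Zhang only on the rows $m_0\ge 2$ that actually occur in their conjecture. In fairness, the paper never addresses this discrepancy (nor the fact that Hong--Zhang's inequality at the very top of each window, $n=\lfloor C^m\rfloor$, is a genuine constraint while the zero-extension makes the corresponding vertical inequality vacuous), so the defect you uncovered is really an imprecision in the theorem's formulation rather than a flaw unique to your argument; but as written your resolution of it is incorrect for $C\ge 3$.
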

\section{Open Challenges and Further Study}

Suppose that $g$ is
a normalized positive real-valued arithmetic function of moderate growth.
We consider double sequences $\mathcal{A}^{g,h}$ assigned to families of polynomials $P_n^{g,\func{id}}(x)$ and
$P_n^{g,1}(x)$ for $h=\func{id}$ or $h=1$.
The horizontal and vertical log-concavity of sequences are important characteristics.
We utilize a conversion principle, Theorem \ref{th1}, which provides an explicit translation
from the double sequences $\mathcal{A}^{g, \func{id} }$ to the double sequence $\mathcal{A}^{\tilde{g},1}$.

\subsection{Unsigned Stirling numbers of the first kind}
Let $g=1$ and $h={\func{id}}$. Then the double sequence $\mathcal{A}^{1, \func{id}}$ assigned to
\[
P_n^{1,\func{id}}(x) = \frac{1}{n!}  \prod_{k=0}^n (x +k) =  \frac{1}{n!} \sum_{m=1}^{n}   S(n,m) \, x^m
\]
is horizontal log-concave. We have already shown that the vertical log-concavity fails. Nevertheless,
numerical investigation shows similarities to the Hong--Zhang conjecture. 
We have recorded this in the following Table 1.

\begin{center}
\begin{minipage}[t]{1.0 \textwidth}
\[
\begin{array}{|l||c|c|c|c|c|c|c|}
\hline
m & 1 & 2 & 3 & 4 & 5 & 6 & 7 \\ \hline
n_{0} & 2 & 5 & 17 & 54 & 162 & 469 & 1330 \\ \hline
\end{array}
\]
\captionof{table}{Smallest integers $n=n_{0}$ where 
$\left( \frac{S\left( n,m\right) }{n!}\right) ^{2}<\frac{S\left( n-1,m\right) S\left( n+1,m\right) }{\left( n-1\right) !\left( n+1\right) !}$.}
\label{stirling}
\end{minipage}
\end{center}

\begin{proposition}
For large $n$ the sequence $\frac{S\left( n,m\right) }{n!}$ is not
log-concave.
\end{proposition}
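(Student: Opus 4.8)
The plan is to show that for every fixed $m$ the sequence $a_{n,m}:=S(n,m)/n!$ is eventually log-convex, so that the defining log-concavity inequality is reversed for all large $n$. Writing $S(n,m)=(n-1)!\,p_m(n)$ with $p_m(n):=e_{m-1}\!\left(1,\tfrac12,\dots,\tfrac1{n-1}\right)$ (so $p_1(n)=1$, $p_2(n)=H(n-1)$, matching the data recorded for $m=2$), one has $a_{n,m}=p_m(n)/n$, and a direct factorial manipulation turns $a_{n,m}^2\ge a_{n-1,m}\,a_{n+1,m}$ into the equivalent form
\[
(n^2-1)\,p_m(n)^2\ \ge\ n^2\,p_m(n-1)\,p_m(n+1),
\]
which is exactly the shape of the quantity $\Delta(n)$ used in the $m=2$ case. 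So I must prove this inequality is reversed for all sufficiently large $n$.

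The engine is the one-step recurrence $p_j(n+1)=p_j(n)+\tfrac1n\,p_{j-1}(n)$, coming from adjoining the variable $1/n$ to the elementary symmetric functions (equivalently from $S(n,m)=(n-1)S(n-1,m)+S(n-1,m-1)$). Applying it to $p_m(n+1)$ and to $p_m(n-1)=p_m(n)-\tfrac1{n-1}p_{m-1}(n-1)$, expanding the product, and using the recurrence once more to simplify the linear term, I would obtain the exact identity
\[
\frac{p_m(n-1)\,p_m(n+1)}{p_m(n)^2}
= 1 + \frac{p_{m-2}(n-1)-p_{m-1}(n-1)}{n(n-1)\,p_m(n)}
      - \frac{p_{m-1}(n-1)\,p_{m-1}(n)}{n(n-1)\,p_m(n)^2}.
\]
Failure of log-concavity at $n$ is then equivalent to the left side exceeding $1-1/n^2$, i.e.\ to the two correction terms summing to more than $-1/n^2$.

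The decisive input is the classical asymptotics of the Stirling numbers of the first kind, $p_m(n)\sim(\ln n)^{m-1}/(m-1)!$ for fixed $m$, which gives $p_{m-1}(n)/p_m(n)=O(1/\ln n)$ and $p_{m-1}(n-1)/p_m(n)=O(1/\ln n)$, while $0\le p_{m-2}(n-1)\le p_{m-1}(n-1)$ for large $n$. Feeding these into the identity shows both correction terms are $O\!\left(1/(n^2\ln n)\right)$, hence their sum has absolute value below $1/n^2$ once $\ln n$ exceeds a constant depending only on $m$; in particular it is $>-1/n^2$. Therefore the displayed ratio equals $1-o(1/n^2)>1-1/n^2$ for all large $n$, the inequality above is reversed, and $\{a_{n,m}\}_n$ fails to be log-concave. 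The case $m=1$ needs no asymptotics, since $a_{n,1}=1/n$ is strictly log-convex (explaining $n_0=2$), and for $m=2$ the identity collapses to $1-\tfrac{1}{n(n-1)H(n-1)}-\tfrac{1}{n(n-1)H(n-1)^2}$, recovering the $\Delta(n)<0$ computation.

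The main obstacle is the uniform control of the two error terms: I must guarantee they are genuinely $o(1/n^2)$ and cannot conspire to reach $-1/n^2$, which forces the ratio estimate $p_{m-1}(n)/p_m(n)=O(1/\ln n)$ with a constant explicit in $m$. I would secure this either from the cited Stirling asymptotics or directly from Newton's identities, using that the power sum $\sum_{k<n}k^{-1}=H(n-1)\to\infty$ while $\sum_{k<n}k^{-j}$ stays bounded for $j\ge2$. Notably the resulting threshold $\ln n>m-1$ matches the empirical onset $n_0\approx e^{m}$ visible in Table~\ref{stirling}, which both explains why the failure is delayed to larger $n$ as $m$ grows and ties the phenomenon to the Hong--Zhang conjecture \cite{HZ20}.
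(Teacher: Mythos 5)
Your argument is correct, and it takes a genuinely different route from the paper's. The paper works purely asymptotically: it quotes Wilf's formula $\frac{S(n,m)}{n!}=\frac{v_{m-1}(\ln(n-1))}{n}+O(n^{\varepsilon/2-2})$ from \cite{Wi93}, substitutes it into the log-concavity difference, and reduces the sign to that of $\left(\frac{1}{n^{2}}-\frac{1}{n^{2}-1}\right)v_{m-1}(\ln(n-1))^{2}<0$. You instead perform all cancellation exactly before estimating anything: writing $S(n,m)=(n-1)!\,p_m(n)$ with $p_m(n)=e_{m-1}\left(1,\frac{1}{2},\dots,\frac{1}{n-1}\right)$ and applying the recurrence $p_j(n+1)=p_j(n)+\frac{1}{n}p_{j-1}(n)$ twice, you arrive at an exact identity expressing $\frac{p_m(n-1)p_m(n+1)}{p_m(n)^{2}}-1$ as two explicit correction terms; I verified this identity and it is correct. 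From there the only analytic input you need is the crude ratio bound $p_{m-1}(n)/p_m(n)=O(1/\ln n)$ (which indeed follows from Newton's identities, since the power sums $\sum_{k<n}k^{-j}$ with $j\geq 2$ stay bounded while $H(n-1)\to\infty$), making both corrections $O\left(1/(n^{2}\ln n)\right)=o(1/n^{2})$ and hence eventually greater than $-1/n^{2}$, so log-concavity fails for all large $n$. Your route buys two things. First, robustness: in the paper's bookkeeping the isolated main term has size $\asymp (\ln n)^{2m-2}n^{-4}$ while the carried error is $O(n^{\varepsilon-3})$, which is formally larger, so the paper's concluding step tacitly depends on cancellations among the error terms --- exactly the cancellations your identity makes explicit at the algebraic level; your proof has no such delicate main-term-versus-error comparison. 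Second, it is elementary (no appeal to \cite{Wi93} is needed) and it exhibits the onset of failure at roughly $\ln n\approx m-1$, consistent with the values $n_0$ in Table \ref{stirling}. What the paper's route buys in exchange is brevity and the precise asymptotic main term, with its constant, through the polynomials $v_{m-1}$.
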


In the proof we will use
Landau's big $O$ notation for functions on an unbounded subset of
the positive real
numbers
and $f\left( x\right) =O\left( g\left( x\right) \right) $ if there is a $C>0$
and an $x_{0}$
such that $\left| f\left( x\right) \right| \leq Cg\left( x\right) $ for
all $x\geq x_{0}$. We also will need the
polynomials $v_{m}\left( y\right) $
of degree $m$ defined by the
power series expansion of
$\frac{\textrm{e}^{xy}}{x\Gamma \left( x\right) }=\sum _{m=0}^{\infty }v_{m}\left( y\right) x^{m}$.

\begin{proof}
From \cite{Wi93}
we can deduce that
$\frac{S\left( n,m\right) }{n!}=\frac{v_{m-1}\left( \ln \left( n-1\right) \right) }{n}+O\left( n^{\varepsilon /2-2}\right) $
for any $0<\varepsilon <1$.
Then
\begin{eqnarray*}
&&\left( \frac{S\left( n,m\right) }{n!}\right) ^{2}-\frac{S\left( n-1,m\right) S\left( n+1,m\right) }{\left( n-1\right) !
\left( n+1\right) !}\\
&=&\left( \frac{v_{m-1}\left( \ln \left( n-1\right) \right) }{n}\right) ^{2}-\frac{v_{m-1}\left( \ln \left( n-2\right) \right) }{n-1}\frac{v_{m-1}\left( \ln \left( n\right) \right) }{n+1}+O\left( n^{\varepsilon -3}\right) .
\label{eq:logconcavequotient}
\end{eqnarray*}
Now
$\ln \left( n-1\pm 1\right) =\ln \left( n-1\right) 
+\ln \left( 1\pm \frac{1}{n-1}\right) =\ln \left( n-1\right) +O\left( n^{-1}\right) $.
Since $v_{m-1}\left( y\right) $ is a polynomial we have
$v_{m-1}\left( \ln \left( n-1\pm 1\right) \right) =v_{m-1}\left( \ln \left( n-1\right) \right) +O\left( n^{-1}\right) $.
Together
this yields
\begin{eqnarray*}
&&\left( \frac{S\left( n,m\right) }{n!}\right) ^{2}-\frac{S\left( n-1,m\right) S\left( n+1,m\right) }{\left( n-1\right) !\left( n+1\right) !}\\
&=&\left( \frac{1}{n^{2}}-\frac{1}{n^{2}-1}\right) \left( v_{m-1}\left( \ln \left( n-1\right) \right) \right) ^{2}+O\left( n^{\varepsilon -3}\right) .
\label{eq:stirlingasymptotisch}
\end{eqnarray*}
Therefore its sign
is determined by the sign of
\[
\left( \frac{1}{n^{2}}-\frac{1}{n^{2}-1}\right) \left( v_{m-1}\left( \ln \left( n-1\right) \right) \right) ^{2}<0.
\]
\end{proof}

\begin{Challenge}
Is the double sequence $\mathcal{A}^{1, \func{id}}$ vertically $C$-log-concave?
\end{Challenge}

We expect that the answer to this question will also give some insight in the Hong--Zhang conjecture.
\subsection{D'Arcais polynomial version of Hong--Zhang's conjecture}
Hong and Zhang \cite{HZ20} considered the coefficients $b_{m,n}$ of the $m$th power of the generating series 
$$f(q)= 
\sum_{n=1}^{\infty} \sigma(n) \, \frac{q^n}{n}$$
and conjectured that there exists a $C>1$, such that for $m \geq 2$ and $1 \leq n \leq C^m$:
\[
b_{m,n}^2 \geq b_{m,n-1} \, b_{m,n+1}.
\]
We have proven in this paper that the conjecture is equivalent to the vertical
$C$-log-concavity of the D'Arcais polynomials $P_n^{\sigma, \func{id}}(x)$ given by
\[
\sum_{n=0}^{\infty} P_n^{\sigma, \func{id}}\left( x\right) \, q^n = \prod_{n=1}^{\infty} \left( 1 - q^n \right)^{-x}.
\]

\begin{Challenge}
Assume $\mathcal{A}^{\sigma, 
\func{id}}$ horizontally log-concave. Does this imply the Hong--Zhang conjecture?
\end{Challenge}

\subsection{D'Arcais polynomials}
The Nekrasov--Okounkov $Q_n(x)$ polynomials are shifted D'Arcais polynomials.
We have checked numerically that for $n \leq 1500$ the D'Arcais polynomials are
(horizontally) log-concave.
We would reinforce the conjecture stated in \cite{Am15} and \cite{HN18} on the unimodality of
the Nekrasov--Okounkov polynomials.

\begin{Challenge}[Conjecture]
The double sequence $\mathcal{A}^{\sigma, \func{id}}$ assigned to the
D'Arcais polynomials is horizontally log-concave.
\end{Challenge}

By Brenti's result this implies the log-concavity of the Nekrasov--Okounkov polynomials and therefore also the
unimodality.
\subsection{Examples for horizontal and vertical $C$-log-concavity}
To get a better understanding of the D'Arcais polynomials it would be beneficial to know which property of
the input function $g=\sigma$
enforces the horizontal and vertical properties of the double sequences
$\mathcal{A}^{\sigma, \func{id}}$ and $\mathcal{A}^{\tilde{\sigma}, \func{1}}$.

\begin{Challenge}
Characterize the set of normalized positive-valued arithmetic functions, 
which provide horizontally and vertically
($C$-)log-concave properties of the assigned double sequences.
\end{Challenge}




\begin{Acknowledgements}
The authors thank the two anonymous referees for many helpful comments.
\end{Acknowledgements}


\end{document}